\documentclass[12pt]{article}

\usepackage{amsmath,amssymb,amsfonts}
\usepackage{amsthm}
\usepackage{enumerate}
\usepackage{mathrsfs}

\allowdisplaybreaks%

{\theoremstyle{plain}%
 \newtheorem{theorem}{Theorem}
 
 \newtheorem{lemma}{Lemma}%
}
{\theoremstyle{remark}

}
{\theoremstyle{definition}

}

\begin{document}

\begin{center}
 {\Large On divisor sums due to Erd\H os and Ramanujan}

 \ 

{\sc John M. Campbell}

\vspace{0.1in}

{\footnotesize Department of Mathematics and Statistics}

{\footnotesize Dalhousie University}

{\footnotesize Halifax, NS B3H 4R2}

{\footnotesize Canada}

\vspace{0.1in}

{\footnotesize {\tt jh241966@dal.ca}}

\vspace{0.1in}

\end{center}

\begin{abstract}
 Let $d(n)$ denote the number of divisors of a positive integer $n$. A classical problem in analytic number theory is given by the 
 asymptotic behavior of the divisor sum $\sum_{n \leq x} \frac{1}{d(n)}$, with Ramanujan having introduced an asymptotic formula for 
 this sum with an explicit evaluation for the constant $A_1$ for the leading term $A_1 \frac{x}{\sqrt{\log x}}$. Gabdullin et al.\ recently 
 considered a hybrid of this problem and the Titchmarsh divisor problem concerning $\sum_{p\leq x} d(p-1)$, proving that 
 $$\sum_{p\leq x} \frac{1}{d(p-1)} \asymp \frac{x}{(\log x)^{3/2}}.$$ This result, together with Erd\H os's asymptotic formula 
 $ \sum_{n \leq x} d(d(n)) \sim c \, x \log \log x $ for a constant $c \in (0, \infty)$, lead us to consider the hybrid $\sum_{n \leq x} 
 \frac{1}{d(d(n))}$ of the Erd\H os and Ramanujan divisor sums. The presence of the reciprocal significantly complicates the analysis, as it 
 amplifies the contribution of integers for which $d(d(n))$ is exceptionally small. In this paper, we prove that $$\sum_{n \leq x} 
 \frac{1}{d(d(n))} \asymp \frac{x}{ \log \log x}, $$ through a combined application of Golomb's estimate for powerful numbers 
 and Tur\'{a}n's quantitative form of the Hardy--Ramanujan theorem. 
\end{abstract}

\vspace{0.1in}

\noindent {\footnotesize \emph{MSC:} 11N37}

\vspace{0.1in}

\noindent {\footnotesize \emph{Keywords:} divisor, divisor sum, prime divisor, prime omega function, Titchmarsh divisor problem, 
 squarefree part}

\section{Introduction}
 The divisor function $d(n) = \sum_{d \mid n} 1 $ is among the most fundamental arithmetic functions. 
 The iteration 
 $d(d(n))$ 
 is closely related to Ramanujan's seminal work on highly composite numbers \cite{Ramanujan1915comp}, and this motivated 
 Erd\H{o}s~\cite{Erdos1968} to investigate the behavior of sums of the form $ \sum_{n \leq x} d(d(n))$. 
 In this direction, Erd\H os proved 
 the remarkable result whereby: There exists a constant $c_1$ such that $ 0 < c_1 < \infty$ and such that 
\begin{equation}\label{displayErdos}
 \sum_{n \leq x} d(d(n)) \sim c_1 \, x \log \log x. 
\end{equation}
 This may be seen in contrast to Dirichlet's formula 
\begin{equation}\label{displayDirichlet}
 \sum_{n\leq x} d(n) = x \log x + (2 \gamma -1) x + O(\sqrt{x}), 
\end{equation}
 for the Euler--Mascheroni constant $\gamma = \lim_{n \to \infty} \big( 1 + \frac{1}{2} 
 + \cdots + \frac{1}{n} - \log n \big)$, in the sense that Dirichlet’s hyperbola method 
 used in the classical derivation of \eqref{displayDirichlet} cannot be applied directly to the sum in \eqref{displayErdos}. 
 Since $d(d(n))$ is not multiplicative, in contrast to $d(n)$ being multiplicative, this may be seen 
 as complicating the analysis of the behavior of $d(d(n))$. 
 Indeed, the difficulties surrounding the behavior of the self-composition $d(d(n))$ may be seen by 
 the solution due to Buttkewitz et al.\ in 2012 \cite{ButtkewitzElsholtzFordSchlagePuchta2012} of a problem due to Ramanujan 
 \cite{Ramanujan1915comp} concerning the maximal order, asymptotically, of $\log d(d(n))$. The Erd\H os estimate in 
 \eqref{displayErdos} may also be seen in relation to a number of subsequent research works improving upon or otherwise extending 
 this estimate~\cite{ErdosKatai1969,Heppner1974,Katai1969,Rieger1972}. 

 In 1915, Ramanujan \cite{Ramanujan1915analytic} introduced asymptotic evaluations for a number of divisor sums, including 
\begin{multline}\label{displayRamanujan}
 \sum_{n \leq x} \frac{1}{d(n)} = \\ 
 x \left( \frac{A_1}{ (\log x)^{\frac{1}{2}} } 
 + \frac{A_2}{ (\log x)^{\frac{3}{2}} } 
 + \cdots + \frac{A_r}{(\log x)^{r - \frac{1}{2}}} + O\left( \frac{1}{(\log x)^{r + \frac{1}{2}}} \right) \right), 
\end{multline}
 for $$ A_{1} = \frac{1}{\sqrt{\pi}} \prod_p \sqrt{p^2 - p} \log\left( \frac{p}{p-1} \right) $$ and for unspecified constants $A_{2}$, $A_{3}$, 
 $\ldots$, $A_{r}$, with a full proof of Ramanujan's estimate having been introduced by Wilson in 1923 \cite{Wilson1923}. If we compare 
 the Erd\H os divisor sum $ \sum_{n \leq x} d(d(n))$ to the Ramanujan divisor sum $ \sum_{n \leq x} \frac{1}{d(n)}$, this 
 and the work of Gabdullin et al.\ \cite{GabdullinKonyaginIudelevich2023} reviewed below 
 raise questions concerning the 
 hybrid divisor sum $\sum_{n \leq x} \frac{1}{d(d(n))}$, which provides the main object of study this paper. 

\subsection{A hybrid Titchmarsh--Ramanujan divisor sum}
 The \emph{Titchmarsh divisor problem} or \emph{Titchmarsh--Linnik divisor problem} \cite{Halberstam1967} refers to the problem given 
 by determining the asymptotic behavior of the divisor sum $\sum_{p \leq x} d(p-1)$, with the estimate 
\begin{equation}\label{firstTitchmarsh}
 \sum_{p \leq x} d(p-1) = O(x) 
\end{equation}
 having been introduced and proved by Titchmarsh in 1930 \cite{Titchmarsh1930}. Letting $\zeta(x)$ denote the Riemann zeta function, 
 a remarkable improvement to \eqref{firstTitchmarsh} was introduced and proved in 1961 by Linnik \cite{Linnik1961}, who proved, via a 
 dispersion method, that 
\begin{equation}\label{displayLinnik}
 \sum_{p \leq x} d(p-1) \sim \frac{ \zeta(2) \zeta(3) }{ \zeta(6) } x 
\end{equation}
 with an error term $\ll \frac{x}{ (\log x)^{\alpha} }$ for $0 < \alpha < 1$, and one may compare this against simplified proofs of this 
 estimate due to Rodriquez \cite{Rodriquez1965} and Halberstam \cite{Halberstam1967}, along with the improvement upon Linnik's 
 error term due to Bombieri et al.~\cite{BombieriFriedlanderIwaniec1986}. 

 As a natural hybrid of the Ramanujan divisor sum $ \sum_{n \leq x} \frac{1}{d(n)}$ and the Titchmarsh divisor sum shown in 
 \eqref{firstTitchmarsh} and \eqref{displayLinnik}, 
 Gabdullin et al.\ \cite{GabdullinKonyaginIudelevich2023}
 studied the divisor sum 
 $\sum_{p\leq x} \frac{1}{d(p-1)}$ and succeeded in proving 
 the order of magnitude evaluation 
\begin{equation}\label{magnitudeGabdullin}
 \sum_{p\leq x} \frac{1}{d(p-1)} \asymp \frac{x}{(\log x)^{3/2}}, 
\end{equation}
 thus improving upon a previous result due to Iudelevich \cite{Iudelevich2022}. Gabdullin et al.\ also conjectured that there exists a 
 constant $c_2$ such that $0 < c_2 < \infty$ and such that 
\begin{equation}\label{Gabdullinconj}
 \sum_{p\leq x} \frac{1}{d(p-1)} \sim c_{2} \frac{x}{(\log x)^{3/2}}
\end{equation}
 and noted that this is likely a difficult conjecture. 

\subsection{A hybrid Erd\H{o}s--Ramanujan divisor sum}
 The problem of estimating the divisor sum indicated in \eqref{magnitudeGabdullin} and \eqref{Gabdullinconj} is referred to as 
 \emph{Karatsuba’s divisor problem} \cite{GabdullinKonyaginIudelevich2023,Iudelevich2022}. Since this concerns the hybrid $\sum_{p 
 \leq x} \frac{1}{d(p-1)}$ of the Ramanujan divisor sum $\sum_{n\leq x} \frac{1}{d(n)}$ and the Titchmarsh divisor sum $\sum_{p \leq 
 x} d(p - 1)$, this motivates the study of the corresponding hybrid divisor sum 
 $\sum_{n \leq x} \frac{1}{d(d(n))}$ given by 
 a ``fusion'' of the same Ramanujan divisor sum 
 and the Erd\H os divisor sum $\sum_{n \leq x} d(d(n))$. 

 We succeed in proving that $$\sum_{n \leq x} \frac{1}{d(d(n))} \asymp \frac{x}{ \log \log x}, $$ by analogy with both the 
 Gabdullin--Konyagin--Iudelevich order of magnitude estimate in \eqref{magnitudeGabdullin} and the Erd\H os asymptotic estimate in 
 \eqref{displayErdos}. We also conjecture, by analogy with the   conjecture in \eqref{Gabdullinconj}, that there exists a constant  $c_{3}$ such  
  that $0 < c_{3} < \infty$ and such that 
\begin{equation}\label{newconj}
 \sum_{n \leq x} \frac{1}{d(d(n))} \sim c_{3} \frac{x}{ \log \log x}. 
\end{equation}
 As is the case with the Gabdullin--Konyagin--Iudelevich conjecture in   \eqref{Gabdullinconj}, the conjecture in \eqref{newconj} seems to 
  be difficult. 
 
 Informally, when dealing with sums of reciprocals of divisors, as opposed to ``non-reciprocal'' divisor sums, applying the reciprocal 
 operation on divisors can be thought of as accentuating  the irregular behavior of $d(n)$ or its subsequences.  This is evidenced by 
 Gabdullin et al.\ \cite{GabdullinKonyaginIudelevich2023} commenting on the difficulty of the conjecture in  \eqref{Gabdullinconj}, in  
 contrast to the original  Titchmarsh divisor problem. 
 This is also reflected by the contrast between the estimate for Dirichlet's original divisor sum in 
 \eqref{displayDirichlet} compared with the estimate
 for Ramanujan's reciprocal divisor sum in \eqref{displayRamanujan}. 

 The research contribution due to Ramanujan \cite{Ramanujan1915analytic} that introduced the divisor sum relation in 
 \eqref{displayRamanujan} may be seen as seminal in the history of divisor sums. 
 In this same contribution, Ramanujan introduced
 an asymptotic evaluation for 
 $ \sum_{n \leq x} d^{2}(n)$, 
 which has inspired a number of notable 
 advances on divisor sums of this form \cite{CullyHugillTrudgian2021,JiaSankaranarayanan2014,MaierSankaranarayanan2005,RamachandraSankaranarayanan2003}. 

\section{Required preliminaries}\label{secprelim}
 For real-valued functions $f$ and $g$ (defined, say, on a subset of $\mathbb{R}_{\geq 0}$), if there exists a constant $C > 0$ and a value 
 $x_0$ such that $ |f(x)| \leq C g(x) $ for all $x \geq x_0$, then we write $f(x) \ll g(x)$. Observe that this is equivalent to $f(x) = O(g(x))$. 
 In a similar spirit, if both $f(x) \ll g(x)$ and $g(x) \ll f(x)$, then it is standard to write $f(x) \asymp g(x)$. 

 The prime omega function $\omega(n)$ gives the number of distinct prime factors of $n \geq 2$, with $\omega(1) = 0$. Similarly, we 
 have that $\Omega(n)$
 gives the total number of prime factors of $n$, counting multiplicities. 
 We then let \(a(n)\) denote the product of the primes \(p\) such that \(p\Vert n\), 
 i.e., the \emph{squarefree} part of \(n\) supported on primes dividing \(n\) exactly once. 
 By letting the prime factorization of $n$ be written as
 $n = \prod_{i} p_{i}^{e_i}$, the \emph{powerful part} of $n$
 may be defined so that 
\begin{equation}\label{displaybn}
 b(n) = \prod_{e_{i} >1} p_{i}^{e_{i}}. 
\end{equation} 
 The definitions of $d(n)$, $\omega(n)$, $a(n)$, and $b(n)$ together give us that 
\begin{equation}\label{keyErdos}
 d(n) = 2^{\omega(a(n))} d(b(n)), 
\end{equation}
 and \eqref{keyErdos} provides a key in the approach used by Erd\H os~\cite{Erdos1968} in the derivation of the 
 asymptotic relation on display in \eqref{displayErdos}. 

\subsection{The Abel summation formula}
 Since this paper concerns average values of arithmetic functions, it is natural that summation techniques often employed in analytic 
 number theory would arise. For a full preliminary treatment of such summation tools, we refer to the appropriate section of De Koninck 
 and Luca's text on analytic number theory \cite[\S1]{DeKoninckLuca2012}, and the following formulation of the Abel summation lemma is 
 required for our purposes. 
 
 \ 

\noindent {\bf (Abel summation formula):} Let $( a_{n} )_{n \geq 1}$ be a sequence of complex numbers and let $f : [1, +\infty) \to 
 \mathbb{C}$ have continuous derivatives for $x \geq 1$. For each real number $x \geq 1$, let $$ A(x) = \sum_{n \leq x} a_n. $$ Then 
 \begin{equation}\label{eq1p4} 
 \sum_{n \leq x} a_{n} f(n) = A(x) f(x) - \int_{1}^{x} A(t) f'(t) \, dt. 
 \end{equation}

\subsection{Golomb's bounds for powerful numbers}
 A positive integer $n$ is said to be \emph{powerful} if $p \mid n \Longrightarrow p^2 \mid n$ for an arbitrary prime $p$. Equivalently, 
 we have that $n = b(m)$ for some $m$, according to the definition in \eqref{displaybn}. Being consistent with notation from Golomb 
 \cite{Golomb1970}, we write $k(x)$ in place of the number of powerful numbers $\leq x$. A remarkable result due to Golomb is such that 
\begin{equation}\label{displayGolomb}
 c_4 x^{1/2} - 3 x^{1/3} \leq k(x) \leq c_4 x^{1/2} 
\end{equation}
 for the constant $c_4 = \frac{\zeta\left( \frac{3}{2} \right)}{\zeta(3)}$. We apply this result from Golomb in Section \ref{secmain} below. 

\subsection{Tur\'an's proof of the Hardy--Ramanujan theorem}\label{secTuran}
 The \emph{Tur\'an--Kubilius inequality} provides a powerful tool for studying the average behavior of additive arithmetic functions 
 \cite[\S3.2]{Tenenbaum2015}. Being consistent with notation from Tenenbaum's text on analytic and probabilistic number theory 
 \cite[\S3.2]{Tenenbaum2015}, for an arithmetic function $f$, we denote the associated mean/expectation so that 
\begin{equation}\label{eq3p2}
 g(N) = \mathbb{E}_{N}(f) := \frac{1}{N} \sum_{1 \leq n \leq N} f(n). 
\end{equation} 
 As suggested by Tenenbaum, one would hope, depending on the input function $f$, that the mean in \eqref{eq3p2} would provide a 
 reasonable approximation to a normal order of $f$. Letting the associated variance be written as 
\begin{equation}\label{eq3p4}
 \mathbb{V}_{N}(f) := \mathbb{E}_{N}\big( |f(n) - g(N)|^2 \big), 
\end{equation} 
 one would want to obtain a relation involving \eqref{eq3p2} and \eqref{eq3p4} to guarantee or formalize how $g$ is, as desired, a 
 normal order for $f$. 
 This provides a way, for the $f(n) = \omega(n)$ case, 
 of proving that 
\begin{equation}\label{displayTuran}
 \sum_{n \leq N} \big( \omega(n) - \log \log N \big)^{2} \ll N \log \log N, 
\end{equation}
 with \eqref{displayTuran} having been proved in a non-probabilistic and elementary way by Tur\'an in 1934 \cite{Turan1934} to prove the 
 Hardy--Ramanujan theorem, which concerns the normal order of $\omega(n)$. Tur\'an's result in \eqref{displayTuran} is to play a 
 central role in our derivation of our main result in Section \ref{secmain} below. 
 
\section{Main result}\label{secmain}

\begin{lemma}\label{lemmaB1}
 Letting $$ \mathcal{B}_{1}(x) := \left\{ n \leq x : b(n) > (\log \log x)^{2} \right\}, $$ we have that $$ \sum_{n \in \mathcal{B}_{1}(x)} 
 \frac{1}{d(d(n))} \ll \frac{x}{\log \log x}. $$
\end{lemma}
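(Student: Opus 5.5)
Since $d(d(n)) \geq 1$, we simply drop the reciprocal and bound the cardinality of $\mathcal{B}_{1}(x)$. This is wasteful, but the trivial bound suffices. Every $n \leq x$ factors uniquely as $n = a(n)\, b(n)$, where $b(n)$ is powerful and $a(n)$ is squarefree and coprime to $b(n)$. We group the elements of $\mathcal{B}_{1}(x)$ according to the value $m = b(n)$. For a fixed powerful $m > (\log\log x)^{2}$, the number of $n \leq x$ with $b(n) = m$ is at most the number of multiples of $m$ up to $x$, namely at most $x/m$. This gives
\begin{equation*}
 \sum_{n \in \mathcal{B}_{1}(x)} \frac{1}{d(d(n))} \leq \#\mathcal{B}_{1}(x) \leq x \sum_{\substack{m \text{ powerful} \\ m > y}} \frac{1}{m}, \qquad y := (\log\log x)^{2}.
\end{equation*}

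The main step is to show that the tail of the reciprocal sum over powerful numbers satisfies $\sum_{m > y,\ m \text{ powerful}} 1/m \ll y^{-1/2}$. For this we apply the Abel summation formula \eqref{eq1p4} with $a_m$ the indicator of powerful numbers, so that $A(t) = k(t)$, and with $f(t) = 1/t$, taken over the range $(y, z]$. Using Golomb's upper bound $k(t) \leq c_4 t^{1/2}$ from \eqref{displayGolomb}, we get
\begin{equation*}
 \sum_{\substack{y < m \leq z \\ m \text{ powerful}}} \frac{1}{m} = \frac{k(z)}{z} - \frac{k(y)}{y} + \int_{y}^{z} \frac{k(t)}{t^{2}}\, dt \leq \frac{c_4}{z^{1/2}} + c_4 \int_{y}^{\infty} t^{-3/2}\, dt .
\end{equation*}
We drop the negative term $-k(y)/y$ and let $z \to \infty$, which gives a bound of $2 c_4\, y^{-1/2}$. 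Here the lower bound in \eqref{displayGolomb} is not needed. One only has to take care with the boundary term at $y$: either restrict the sum to $m > y$ as above, or start the Abel sum at $1$ and subtract. Either way the constants are absolute.

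Substituting $y = (\log\log x)^{2}$ gives $y^{-1/2} = 1/\log\log x$, and hence
\begin{equation*}
 \sum_{n \in \mathcal{B}_{1}(x)} \frac{1}{d(d(n))} \leq 2 c_4 \frac{x}{\log\log x},
\end{equation*}
which is the claimed bound. There is no real obstacle here. The threshold $(\log\log x)^{2}$ in the definition of $\mathcal{B}_{1}(x)$ was evidently chosen precisely so that the square-root decay coming from Golomb's estimate yields exactly the target order $x/\log\log x$. The only points that need checking are the routine Abel-summation step and the observation that $b(n) = m$ forces $m \mid n$.
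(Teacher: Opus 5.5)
Your proposal is correct and follows essentially the same route as the paper: you discard the reciprocal, bound $\#\mathcal{B}_1(x)$ by $x\sum_{m>y,\ m\ \text{powerful}} 1/m$ using the observation that $b(n)=m$ forces $m\mid n$, and control this tail by Abel summation with only Golomb's upper bound $k(t)\le c_4 t^{1/2}$, exactly as the paper does. Your version also tracks the explicit constant $2c_4$, which the paper leaves implicit in the $\ll$.
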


\begin{proof} 
 Write $Y = (\log\log x)^2$. For a fixed powerful number $\beta$, the number of positive integers $n \leq x$ such that $b(n) = \beta$ is 
 bounded above by the \emph{total} number $\lfloor \frac{x}{\beta} \rfloor$ of positive multiples of $\beta$ not exceeding $x$, and 
 hence the relations whereby 
\begin{equation}\label{firstA2proof}
\#\mathcal{B}_1(x) \leq \sum_{\substack{\beta > Y\\ \beta \ {\rm powerful}}} \#\{n\le x : b(n) = \beta\} \leq 
 x \sum_{\substack{\beta > Y\\ \beta \ {\rm powerful}}}\frac1\beta.
\end{equation}
 By the Golomb estimate in \eqref{displayGolomb}, we find that 
\begin{equation}\label{consequenceGolomb}
k(t)\ll t^{1/2}.
\end{equation}
 In the formulation of the Abel summation formula given in Section \ref{secprelim}, we set $a_n = 1$ if $n$ is powerful and $a_n = 
 0$ otherwise, and we set $f(t) = \frac{1}{t}$, so that $$ \sum_{\substack{Y<b\le Z\\ b\ {\rm powerful}}} \frac{1}{b} = \frac{k(Z)}{Z} - 
 \frac{k(Y)}{Y} + \int_Y^Z \frac{k(t)}{t^2} \, dt, $$ and hence 
\begin{equation}\label{withoutY}
 \sum_{\substack{Y < b \leq Z\\ b\ {\rm powerful}}}\frac{1}{b} \leq \frac{k(Z)}{Z} + \int_Y^Z \frac{k(t)}{t^2}\,dt. 
\end{equation}
 Using the consequence in \eqref{consequenceGolomb} of the Golomb bounds, and letting \(Z\to\infty\) in \eqref{withoutY} and applying 
 the monotone convergence theorem, it follows that
\begin{equation}\label{aftermonotone}
 \sum_{\substack{b>Y\\ b\ {\rm powerful}}}\frac{1}{b} \ll Y^{-1/2}.
\end{equation} 
 From \eqref{firstA2proof} and \eqref{aftermonotone} together, we find that $ \#\mathcal{B}_1(x)\ll \frac{x}{\log\log x}$. Finally, since 
 $d(d(n)) \geq 1$, we have $$ \sum_{n \in \mathcal{B}_1(x)} \frac1{d(d(n))} \leq \#\mathcal{B}_1(x) \ll \frac{x}{\log\log x}, $$ as desired. 
\end{proof}

\begin{lemma}\label{LemmaB2}
Let $$ \mathcal{B}_2(x) := \left\{n \leq x:\omega(n) \notin \left[\frac12\log\log x,2\log\log x\right]\right\}. $$ Then $$ \sum_{n \in 
 \mathcal{B}_2(x)} \frac{1}{d(d(n))} \ll \frac{x}{\log\log x}. $$
\end{lemma}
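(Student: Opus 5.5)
Since $d(d(n)) \geq 1$, it is enough to bound the number of elements of $\mathcal{B}_2(x)$, in the same way that Lemma \ref{lemmaB1} was finished off. The plan is to show that
$$ \#\mathcal{B}_2(x) \ll \frac{x}{\log\log x}, $$
using Tur\'an's inequality \eqref{displayTuran} with $N = \lfloor x \rfloor$ and a Chebyshev-type (second moment) argument.

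Write $L = \log\log x$ and assume $x$ is large enough that $L \geq 1$. Take $n \in \mathcal{B}_2(x)$. Then either $\omega(n) < \frac12 L$ or $\omega(n) > 2L$, and in both cases $|\omega(n) - L| > \frac12 L$, so $(\omega(n)-L)^2 > \frac14 L^2$. Every term on the left of \eqref{displayTuran} is nonnegative, so keeping only the terms with $n \in \mathcal{B}_2(x)$ gives
$$ \frac{L^2}{4}\,\#\mathcal{B}_2(x) \leq \sum_{n\leq x}\big(\omega(n)-\log\log x\big)^2 \ll x \log\log x. $$
Dividing by $L^2/4$ yields $\#\mathcal{B}_2(x) \ll x/L$, which is the required bound.

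The only point that needs care is the gap between $\log\log N$ in \eqref{displayTuran} and $\log\log x$ in the statement, when $x$ is not an integer. Here $N = \lfloor x\rfloor$ and $\log\log N = \log\log x + O(1/(x\log x))$. So, by $(u+v)^2 \leq 2u^2 + 2v^2$, replacing $\log\log N$ by $\log\log x$ changes the sum by at most a constant factor plus an additive $O(x)$ error, which is absorbed into $x\log\log x$.

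Apart from that, no step is a genuine obstacle. The lemma is a direct Chebyshev-inequality consequence of Tur\'an's estimate \eqref{displayTuran}, combined with the trivial bound $1/d(d(n)) \leq 1$.
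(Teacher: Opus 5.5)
Your proposal is correct and follows the paper's argument: Chebyshev's inequality applied to Tur\'an's second-moment bound gives $\#\mathcal{B}_2(x) \ll x/\log\log x$, and the trivial bound $1/d(d(n)) \leq 1$ then finishes the proof. Your remark about replacing $\log\log\lfloor x\rfloor$ by $\log\log x$ handles a detail the paper leaves implicit.
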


\begin{proof}
 If \(n\in\mathcal{B}_2(x)\), then
\begin{equation}\label{firstproofB2}
 \frac12\log\log x \leq \left|\omega(n)-\log\log x\right|. 
\end{equation}
 From  \eqref{firstproofB2}, we   obtain that
\begin{align*}
 \frac{1}{4} (\log \log x)^{2} \# \mathcal{B}_{2}(x) 
 & \leq \sum_{n \in \mathcal{B}_{2}(x)} \big( \omega(n) - \log \log x \big)^{2} \\ 
 & \leq \sum_{n \leq x} \big( \omega(n) - \log \log x \big)^{2} \\ 
 & \ll x \log \log x, 
\end{align*}
 where we have applied the Tur\'{a}n estimate reviewed in Section \ref{secTuran}. So, since \(0 < \frac{1}{d(d(n))} \leq 1\), we see that 
$$ \sum_{n\in\mathcal{B}_2(x)}
\frac1{d(d(n))}
\leq \#\mathcal{B}_2(x)
\ll \frac{x}{\log\log x}, $$
 as desired. 
\end{proof}

\begin{lemma}\label{LemmaComplement}
We have that 
$$ \sum_{\substack{n\leq x\\
n\notin \mathcal{B}_1(x) \cup \mathcal{B}_2(x)}}
\frac1{d(d(n))} \ll
\frac{x}{\log\log x}. $$
\end{lemma}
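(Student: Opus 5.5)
For $n$ in the complement of $\mathcal{B}_1(x)\cup\mathcal{B}_2(x)$ we have $b(n)\le (\log\log x)^2$ and $\frac12\log\log x\le \omega(n)\le 2\log\log x$. The plan is to use \eqref{keyErdos} to bound $d(d(n))$ from below pointwise by a quantity of size $\gg \log\log x$. Then the sum is at most $x$ times the reciprocal of that bound, with no need for any counting argument.

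First I control the powerful part. Since $b(n)$ is powerful, each prime dividing $b(n)$ contributes at least $p^2\ge 4$. Hence $\omega(b(n))\le \frac{1}{\log 4}\log b(n)\le \frac{2}{\log 4}\log\log\log x$, and in particular $\omega(b(n))=o(\log\log x)$. It follows that
$$\omega(a(n))=\omega(n)-\omega(b(n))\ge \tfrac12\log\log x - O(\log\log\log x)\ge \tfrac13\log\log x$$
for $x$ sufficiently large. (Small $x$ are absorbed into the implied constant.)

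Next I write $d(n)=2^{k}m$ with $k=\omega(a(n))$ and $m=d(b(n))$, by \eqref{keyErdos}. Let $2^{j}\,\Vert\, m$, so that $d(n)=2^{k+j}m'$ with $m'$ odd. Multiplicativity of $d$ then gives
$$d(d(n))=(k+j+1)\,d(m')\ge k+1\ge \tfrac13\log\log x.$$
This step is the crux, but it is elementary: the exponent of $2$ in $d(n)$ is at least $\omega(a(n))$, and $d$ of a number divisible by $2^{e}$ is at least $e+1$. Indeed, the bound $d(N)\ge v_2(N)+1$ alone suffices, with no need to split off $m'$.

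Summing over $n\le x$ in the complement then gives
$$\sum_{\substack{n\le x\\ n\notin\mathcal{B}_1(x)\cup\mathcal{B}_2(x)}}\frac{1}{d(d(n))}\le \frac{3x}{\log\log x},$$
as required. The only point needing care is the claim $\omega(b(n))=o(\log\log x)$. This is exactly where the threshold $(\log\log x)^2$ in Lemma~\ref{lemmaB1} enters, and any threshold of the form $(\log x)^{o(1)}$-type smaller than $2^{\epsilon\log\log x}$ would also work. Combined with Lemmas~\ref{lemmaB1} and~\ref{LemmaB2}, this yields the upper bound in the main theorem.
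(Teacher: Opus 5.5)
Your proof is correct and is essentially the paper's argument: the pointwise bound $d(d(n)) \geq \omega(a(n)) + 1$ comes from the power of $2$ in $d(n) = 2^{\omega(a(n))} d(b(n))$, and $\omega(a(n)) = \omega(n) - \omega(b(n)) \gg \log\log x$ follows from $\omega(n) \geq \frac12\log\log x$ and $b(n) \leq (\log\log x)^2$, after which you sum trivially. The only differences are cosmetic: you use $4^{\omega(b(n))} \leq b(n)$ (valid because $b(n)$ is powerful) where the paper uses $2^{\omega(b(n))} \leq b(n)$, and you note that $d(N) \geq v_2(N) + 1$ suffices without splitting off the odd part of $d(n)$.
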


\begin{proof}
 Let \(n \le x\), with \(n \notin \mathcal{B}_1(x) \cup \mathcal{B}_2(x)\). Recalling the divisor function identity in \eqref{keyErdos}, and 
 following a similar approach as in the work of Erd\H{o}s \cite{Erdos1968}, we write $ d(b(n)) = 2^{u(n)} v(n)$ for $v(n)$ odd, giving 
 us that 
\begin{equation}\label{userelative}
 d(n) = 2^{\omega(a(n)) + u(n)} v(n).
\end{equation}
 Since $2^{\omega(a(n)) + u(n)}$ and $v(n)$ are relatively prime, we obtain from \eqref{userelative} that $ d(d(n)) = \big(\omega(a(n)) + 
 u(n) + 1\big) d(v(n))$. This gives us the lower bound 
\begin{equation}\label{lowerboundddn}
d(d(n)) \ge \omega(a(n)) + 1.
\end{equation}
 We proceed to estimate \(\omega(a(n))\). Since \(n\notin \mathcal B_2(x)\), we have that 
\begin{equation}\label{consequencenotB2}
\omega(n)\geq \frac12\log\log x.
\end{equation}
 Moreover, since \(n\notin \mathcal B_1(x)\), we have that 
\begin{equation}\label{sincenotinB1}
 b(n) \leq (\log\log x)^2.
\end{equation}
 Since 
\begin{equation}\label{omegadiff}
 \omega(a(n))=\omega(n)-\omega(b(n)), 
\end{equation}
 and since \eqref{sincenotinB1} gives us that 
$ 2^{\omega(b(n))} \leq b(n) \leq (\log\log x)^2$,
we obtain
\begin{equation}\label{omegalogloglog}
 \omega(b(n))\leq \frac{2\log\log\log x}{\log 2}.
\end{equation}
 A combined application of \eqref{consequencenotB2}, \eqref{omegadiff}, and \eqref{omegalogloglog} then allows us to obtain that 
\begin{equation}\label{omegaangeq}
 \omega(a(n)) \geq \frac12\log\log x - \frac{2\log\log\log x}{\log 2} \gg \log\log x.
\end{equation}
 In turn, combining \eqref{lowerboundddn} and \eqref{omegaangeq} allows us to deduce that 
\begin{equation}\label{notAbel}
 \frac{1}{d(d(n))} \ll \frac{1}{\log\log x}, 
\end{equation}
 uniformly for $n \leq x$ such that $n \not\in \mathcal{B}_{1}(x) \cup \mathcal{B}_{2}(x)$. Applying the summation operator 
 $\sum_{\substack{n \leq x \\ n \not\in \mathcal{B}_{1}(x) \cup \mathcal{B}_{2}(x) }} \cdot $ to both sides of \eqref{notAbel} then allows 
 us to obtain the desired result. 
\end{proof}

\begin{lemma}\label{LemmaLower}
 We have that $$ \frac{x}{\log\log x} \ll \sum_{n\le x}\frac{1}{d(d(n))}. $$
\end{lemma}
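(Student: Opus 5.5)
The plan is to restrict the sum to squarefree integers. For these, $d(d(n))$ can be computed exactly, and it is only of size $\omega(n)+1$. If $n$ is squarefree, then in the notation of \eqref{keyErdos} we have $b(n)=1$ and $a(n)=n$, so $d(n)=2^{\omega(n)}$. Since $d(2^{k})=k+1$, this gives the identity
$$ d(d(n)) = \omega(n)+1 \qquad (n \text{ squarefree}). $$
So no arithmetic property of $\omega(n)$ is needed; we only need $\omega(n)$ to be of size $\ll \log\log x$. This is exactly what the complement of $\mathcal{B}_2(x)$ from Lemma \ref{LemmaB2} provides.

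Concretely, let $\mathcal{S}(x)$ be the set of squarefree $n\le x$ with $n\notin\mathcal{B}_2(x)$. Since every term of the sum is positive, we drop all other $n$. For $n\in\mathcal{S}(x)$ we have $\omega(n)\le 2\log\log x$, so
$$ \sum_{n\le x}\frac{1}{d(d(n))} \;\ge\; \sum_{n\in\mathcal{S}(x)}\frac{1}{\omega(n)+1} \;\ge\; \frac{\#\mathcal{S}(x)}{2\log\log x+1}. $$

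It then suffices to show $\#\mathcal{S}(x)\gg x$. First, the number of squarefree $n\le x$ is $\frac{6}{\pi^2}x+O(\sqrt{x})$. One could equally get $\ge x/2$ with an elementary bound: the non-squarefree $n\le x$ number at most $\sum_{p} x/p^2 < 0.46\,x$, which avoids quoting the classical asymptotic. Second, the proof of Lemma \ref{LemmaB2}, via Tur\'an's estimate \eqref{displayTuran}, shows $\#\mathcal{B}_2(x)\ll x/\log\log x = o(x)$. Hence
$$ \#\mathcal{S}(x) \ge \tfrac{1}{2}x - \#\mathcal{B}_2(x) \gg x $$
for $x$ large, and combining this with the display above yields $\sum_{n\le x} 1/d(d(n)) \gg x/\log\log x$.

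I do not expect a genuine obstacle here. The only point needing care is to count the squarefree integers with an explicit positive proportion, rather than something merely nonzero, so that removing the $o(x)$ exceptional set $\mathcal{B}_2(x)$ still leaves $\gg x$ integers. The key observation that makes everything work is that $d(d(n))=\omega(n)+1$ is exact for squarefree $n$.
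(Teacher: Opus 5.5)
Your proposal is correct and follows the paper's proof: restrict to the squarefree $n\le x$ outside $\mathcal{B}_2(x)$ (exactly the paper's set $\mathcal{B}_3(x)$), and use the exact identity $d(d(n))=\omega(n)+1$. Then combine a positive-proportion count of squarefree integers with $\#\mathcal{B}_2(x)\ll x/\log\log x$; your optional elementary bound $\sum_p x/p^2<0.46\,x$ is a harmless substitute for the classical $\frac{6}{\pi^2}x+O(\sqrt{x})$ count that the paper quotes.
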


\begin{proof}
Let
\begin{equation}\label{displayS2}
\mathcal{B}_{3}(x) := \left\{ n \le x : n \text{ squarefree},\ 
\omega(n) \in \left[\frac12\log\log x, 2\log\log x\right] \right\}.
\end{equation}
 It is classically and well known that $ \#\{n \le x : n \text{ squarefree}\} = \frac{6}{\pi^2}x + O(\sqrt{x})$. 
 By rewriting \eqref{displayS2} as a set difference involving $\mathcal{B}_{2}$, we find that
\begin{equation}\label{fromsetdiff}
 \#\mathcal{B}_{3}(x) \geq \#\{ n \leq x : \text{$n$ squarefree} \} 
 - \#\mathcal{B}_{2}(x). 
\end{equation}
 It was shown in the proof of Lemma \ref{LemmaB2} that $ \#\mathcal{B}_{2}(x) \ll \frac{x}{\log \log x}$. This together with the classical 
 estimate for $\#\{n \le x : n \text{ squarefree}\}$ then give us, from \eqref{fromsetdiff}, that $\#\mathcal{B}_{3}(x) \gg x$. Now, let 
 $n \in \mathcal{B}_{3}(x)$. Since \(n\) is squarefree, we have that the relation $ d(n) = 2^{\omega(n)}$ holds. As a consequence, we 
 find that the equalities such that $ d(d(n)) = d\big(2^{\omega(n)}\big)=\omega(n)+1 $ hold true. Moreover, the definition of 
 $\mathcal{B}_{3}(x)$ gives us that $ \omega(n)+1\leq 2\log\log x+1 \ll \log\log x$, so that $ \frac{1}{d(d(n))} = \frac{1}{\omega(n) + 
 1} \gg \frac{1}{\log\log x}$. It then follows that $$ \sum_{n \leq x} \frac{1}{d(d(n))} \geq \sum_{n\in\mathcal{B}_{3}(x)}\frac{1}{d(d(n))}
\gg \frac{\#\mathcal{B}_{3}(x)}{\log\log x} \gg
\frac{x}{\log\log x}, $$ giving us the desired result. 
\end{proof}

\begin{theorem}\label{MainTheorem}
We have
\begin{equation}\label{displaymain}
\sum_{n\le x}\frac{1}{d(d(n))}
\asymp
\frac{x}{\log\log x}.
\end{equation}
\end{theorem}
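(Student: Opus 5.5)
The theorem follows by assembling the four lemmas already proved in this section. For the upper bound, I would split the range $n\leq x$ into three disjoint pieces: $\mathcal{B}_1(x)$, $\mathcal{B}_2(x)\setminus\mathcal{B}_1(x)$, and the complement of $\mathcal{B}_1(x)\cup\mathcal{B}_2(x)$. Since every summand $\frac{1}{d(d(n))}$ is positive, the sum over $n\le x$ is at most
$$ \sum_{n\in\mathcal{B}_1(x)}\frac{1}{d(d(n))}+\sum_{n\in\mathcal{B}_2(x)}\frac{1}{d(d(n))}+\sum_{\substack{n\le x\\ n\notin\mathcal{B}_1(x)\cup\mathcal{B}_2(x)}}\frac{1}{d(d(n))}, $$
where any overlap of $\mathcal{B}_1(x)$ and $\mathcal{B}_2(x)$ is simply counted twice. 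Each of the three terms is $\ll \frac{x}{\log\log x}$, by Lemma \ref{lemmaB1}, Lemma \ref{LemmaB2}, and Lemma \ref{LemmaComplement} respectively. Adding these three bounds, with implied constants that do not depend on $x$, gives
$$ \sum_{n\le x}\frac{1}{d(d(n))}\ll \frac{x}{\log\log x}. $$

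For the lower bound, I would invoke Lemma \ref{LemmaLower} directly. It gives $\frac{x}{\log\log x}\ll \sum_{n\le x}\frac{1}{d(d(n))}$ by restricting to squarefree $n$ with $\omega(n)$ in the typical range. Combining the two inequalities yields \eqref{displaymain}, in the sense of $\asymp$ defined in Section \ref{secprelim}.

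I see no genuine obstacle here, since all the analytic work sits in the lemmas. The only bookkeeping point is that every bound must hold for all sufficiently large $x$. So I would take $x_0$ to be the maximum of the thresholds coming from the individual lemmas. That maximum must also be large enough that $\log\log x>0$ and that the lower bound $\frac12\log\log x-\frac{2\log\log\log x}{\log 2}\gg\log\log x$ used in Lemma \ref{LemmaComplement} is valid. With that choice of $x_0$, both directions of the estimate hold simultaneously for $x\ge x_0$.
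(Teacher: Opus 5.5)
Your proposal is correct and follows the paper's proof: the upper bound comes from covering $\{n\le x\}$ by $\mathcal{B}_1(x)$, $\mathcal{B}_2(x)$ and the complement of their union and applying Lemmas \ref{lemmaB1}, \ref{LemmaB2} and \ref{LemmaComplement}, and the lower bound is Lemma \ref{LemmaLower}. Your remarks about counting the overlap twice and taking a common threshold $x_0$ are bookkeeping that the paper leaves implicit.
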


\begin{proof}
 The upper bound associated with the order of magnitude estimate in \eqref{displaymain} follows by decomposing the index set 
 $$ \{ n : n \leq x \} = \mathcal{B}_1(x) \cup \mathcal{B}_2(x) \cup \big( \{ n : n \leq x \} \setminus(\mathcal{B}_1(x) \cup 
 \mathcal{B}_2(x))\big) $$ and applying Lemmas \ref{lemmaB1}, \ref{LemmaB2}, 
 and \ref{LemmaComplement}. The corresponding lower bound is given in Lemma \ref{LemmaLower}.
\end{proof}

\section{Conclusion}
 Recall Gabdullin et al.\ \cite{GabdullinKonyaginIudelevich2023} commented that the conjectured asymptotic equivalence in 
 \eqref{Gabdullinconj} appears to be difficult. It appears that the conjectured asymptotic equivalence corresponding to our main result 
 in Theorem \ref{MainTheorem} is similarly difficult, and we conclude by encouraging the exploration of this. 

\subsection*{Acknowledgements}
 The author is very thankful to an anonymous referee for a previous, unpublished version of this work for a number of very helpful 
 suggestions. Notably, the decomposition of integers into squarefree and powerful parts and the treatment of exceptional sets used in 
 this paper are partly based on the referee’s report. The author is also very thankful to Karl Dilcher for many useful discussions related to 
 this paper and thanks Marco Cantarini and Amiram Eldar for useful commentary related to this paper. 
 The author used the GPT-5.3 model to be of assistance with exposition and to explore heuristic arguments, but 
 all mathematical results and proofs were verified independently by the author.

\subsection*{Statements and Declarations}
 There are no financial or non-financial competing interests that are directly or indirectly related to the work 
 to disclose. 

 \bibliographystyle{plain}
\bibliography{seprefe}

\end{document}